\newtheorem{theorem}{Theorem}[section]
\newtheorem{lemma}[theorem]{Lemma}
\newtheorem{corollary}[theorem]{Corollary}
\theoremstyle{definition}
\theoremstyle{remark}
\newtheorem{remark}[theorem]{Remark}
\newtheorem{question}[theorem]{Question}
\numberwithin{equation}{section}
\DeclareMathOperator{\vol}{vol}
\DeclareMathOperator{\id}{id}
\renewcommand{\epsilon}{\varepsilon}
\renewcommand{\phi}{\varphi}
\renewcommand{\kappa}{\varkappa}
\begin{document}

\title{Outer billiards of symplectically self-polar convex bodies}

\author{Mark Berezovik}

\address{Mark Berezovik, School of Mathematical Sciences, Tel Aviv University, Israel,  69978}

\email{m.berezovik@gmail.com}

\author{Misha Bialy}

\address{Misha Bialy, School of Mathematical Sciences, Tel Aviv University, Israel,  69978}

\email{bialy@tauex.tau.ac.il}

\thanks{Mark Berezovik was partially supported by the ISF grant 938/22 and partially supported by the ISF-NSFC grant 3231/23.} 
\thanks{Misha Bialy was partially supported by ISF grant 974/24.}

\subjclass[2020]{37C05, 52A21, 53A07, 53D99}

\date{October 2025}

\begin{abstract}
	It is known that $C^1$-smooth strictly convex Radon norms in $\mathbb{R}^2$ can be characterized by the property that the outer billiard map, which corresponds to the unit ball of the norm, has an invariant curve consisting of 4-periodic orbits.
	In higher dimensions, Radon norms are necessarily Euclidean. However, we show in this paper that the property of existence of an invariant curve of 4-periodic orbits allows a higher-dimensional extension to the class of symplectically self-polar convex bodies. Moreover, this class of convex bodies provides the first non-trivial examples of invariant hypersurfaces for outer billiard map. This is in contrast with conventional Birkhoff billiards in higher dimensions, where it was proved by Berger and Gruber that only ellipsoids have caustics. It is not known, however, if non-trivial invariant hypersurfaces can exist for higher-dimensional Birkhoff billiards.
\end{abstract}

\maketitle
\section{Introduction}
It was proved in \cite{MR1071638, MR1348796} that for higher-dimensional Birkhoff billiards only ellipsoids have caustics. Therefore, only for ellipsoids there exist invariant hypersurfaces for the billiard map which correspond to caustics.
However, these results do not prevent the existence of invariant hypersurfaces
in the phase space of the billiard map, not coming from caustics. It is an open question if such invariant hypersurfaces can exist for higher-dimensional Birkhoff billiards.
Here, we address the question on invariant hypersurfaces for multi-dimensional outer billiards \cite{tabachnikov1995dual,tabachnikov1995billiards}.

In this paper, we study a natural class of symplectically self-polar convex bodies in $\mathbb R^{2n}$. For $n=1$ this class coincides with the unit disks of Radon norms \cite{Day,Radoncurves, martini2006antinorms}.
It was observed in \cite{bialy2022self} that for $C^1$-smooth strictly convex norms in $\mathbb{R}^2$ the Radon property can be rephrased in terms of the outer billiard corresponding to the unit disk of the norm as follows. A $C^1$-smooth strictly convex norm is Radon if and only if the outer billiard for the unit disk of the norm has an  invariant curve consisting of 4-periodic orbits.

In higher dimensions, Radon norms are necessarily Euclidean \cite{Day}. However, we show in this paper that the property of existence of invariant curve consisting of $4$-periodic orbits allows a higher-dimensional extension to the class of symplectically self-polar convex bodies.

We shall prove in Theorem \ref{thm:main} and Section~\ref{sec:invariant_hypersurface}, that for symplectically self-polar convex body of class $C^1$, the outer billiard has an invariant hypersurface consisting of centrally symmetric 4-periodic orbits. Moreover, Theorem~\ref{thm:main_reverse} states that the converse is also true for $C^2$-smooth strictly convex bodies (it is not clear to us if the result still holds for the $C^1$ case).

Let $X \subset \mathbb{R}^{2n}$ be a convex body with the origin in its interior, and let $\omega$ be a standard linear symplectic form in $\mathbb{R}^{2n}$. Using this, we define the symplectic polar convex body
\[
	X^\omega = \{y \in \mathbb{R}^{2n}:\ \forall x \in X\ \omega(x,y) \leq 1\}.
\]
This definition is similar to the definition of the Euclidean polar convex body
\[
	X^\circ = \{y \in \mathbb{R}^{2n}:\ \forall x \in X\ \langle x,y\rangle \leq 1\}
\]
where $\langle \cdot, \cdot \rangle$ denotes the standard inner product in $\mathbb{R}^{2n}$. The relation between $X^\omega$ and $X^\circ$ is given by $X^\omega = JX^\circ$, where $J$ is the multiplication by $\sqrt{-1}$ under the standard identification of $\mathbb{C}^n \cong \mathbb{R}^{2n}$.

If $X = X^\omega$, we say that $X$ is a \emph{symplectically self-polar convex body}. It is important to note that every symplectically self-polar convex body is centrally symmetric~\cite[Lemma 3.1]{berezovik2022symplectic}. Boundaries of such bodies in the plane are known as Radon curves~\cite{Radoncurves,martini2006antinorms}. The class of symplectically self-polar convex bodies was previously studied in~\cite{berezovik2022symplectic,berezovik2023symplectically} in the context of volume and capacity. In particular, it was shown that Mahler's conjecture~\cite{Mahler1939}
\[
	\vol K \cdot \vol K^\circ \geq \frac{4^n}{n!}
\]
for any centrally symmetric convex body $K \subset \mathbb{R}^n$ is equivalent to the conjecture
\[
	\vol X \geq \frac{2^n}{n!}
\]
for any symplectically self-polar convex body $X \subset \mathbb{R}^{2n}$.

\subsection*{Acknowledgments.} The authors thank Roman Karasev and the anonymous referee for useful remarks and comments.

\section{Preliminaries}

In this work, we discuss some properties of symplectic outer billiard maps that are generated by symplectically self-polar convex bodies with smooth boundary. The symplectic outer billiard map is well-defined if the body has $C^1$-smooth boundary and is strictly convex. In the case of symplectically self-polar bodies, strict convexity is equivalent to smoothness. This is stated in the next lemma.

\begin{lemma}\label{lem:strict_conv}
	Let $X \subset \mathbb{R}^{2n}$ be a symplectically self-polar convex body. Then $X$ is strictly convex if and only if $X$ has $C^1$-smooth boundary.
\end{lemma}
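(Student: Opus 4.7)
The plan is to reduce the statement to the classical polar duality between strict convexity and $C^1$-smoothness by exploiting that symplectic self-polarity forces $X$ to be linearly isomorphic to its Euclidean polar.

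The first step is to recall the standard fact from convex geometry: for a convex body $K \subset \mathbb{R}^m$ with the origin in its interior, $K$ is strictly convex if and only if $K^\circ$ has $C^1$-smooth boundary (equivalently, each boundary point of $K^\circ$ has a unique supporting hyperplane). This is a textbook duality for the Legendre transform of the gauge function. I would simply quote it.

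Next I use the relation recorded in the introduction: $X^\omega = J X^\circ$, where $J$ is the linear operator ``multiplication by $\sqrt{-1}$''. Since $X = X^\omega$ by assumption, this gives
\[
X = J X^\circ, \qquad \text{equivalently} \qquad X^\circ = J^{-1} X.
\]
Because $J^{-1}$ is a linear automorphism of $\mathbb{R}^{2n}$, it preserves both strict convexity and $C^1$-smoothness of boundaries.

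Combining these two observations yields the equivalence in a single chain:
\[
X \text{ strictly convex} \;\Longleftrightarrow\; X^\circ \text{ is } C^1 \text{-smooth} \;\Longleftrightarrow\; J^{-1}X \text{ is } C^1 \text{-smooth} \;\Longleftrightarrow\; X \text{ is } C^1 \text{-smooth},
\]
the first equivalence being the classical duality, the second being the identity $X^\circ = J^{-1}X$, and the third following from the fact that $J$ is a linear isomorphism. I do not anticipate any real obstacle: the only conceptual input is the classical polar duality, and the only input specific to this paper is the identification $X^\omega = JX^\circ$ already established in the preliminaries. The proof is essentially a one-line observation once both ingredients are in place.
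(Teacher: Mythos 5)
Your proposal is correct and follows essentially the same route as the paper: both arguments invoke the classical duality between strict convexity of $X$ and $C^1$-smoothness of $X^\circ$, and then use the identity $X = X^\omega = JX^\circ$ together with the fact that $J$ is a linear isomorphism to transfer the property back to $X$. No gaps.
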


\begin{proof}
	In convex geometry, it is well known that the body $X^\circ$ has a $C^1$-smooth boundary if and only if $X$ is strictly convex. In our case, we have $X = X^\omega = JX^\circ$. Therefore, $X$ and $X^\circ$ coincide up to rotation. Hence, $X$ is strictly convex if and only if $X$ has a $C^1$-smooth boundary.
\end{proof}

We now turn to the crucial notation of this paper. Let $X \subset \mathbb{R}^{2n}$ be an arbitrary convex body with the origin in its interior and $\partial X$ be $C^1$-smooth. Let us introduce a map $f \colon \partial X \to \partial X^\omega$ which is uniquely determined by the requirement $\omega(x,f(x))= 1$.

Observe that the characteristic line of $\partial X$ at $x$ (which is defined as $\ker\omega|_{T_x \partial X} \subseteq T_x \partial X$)  is parallel to $f(x)$ (this fact was noticed before in~\cite[Proof of Lemma 3.6]{karasev2024convex}). Indeed, from the definition of $X^\omega$, we have $\omega(y,f(x)) \leq 1$ for every $y \in X$. Hence, the hyperplane $\omega(\cdot,f(x)) = 1$ is a tangent hyperplane of $\partial X$ at the point $x$. Then it is easy to see that $f(x)$ is actually parallel to the characteristic line in this hyperplane. Moreover, one can show that $f$ is just a restriction of a Hamiltonian vector field generated by the Hamiltonian function $H = \|\cdot\|_X$ on $\partial X$. Here $\|\cdot\|_X$ is a norm on $\mathbb{R}^{2n}$ whose unit ball is $X$. Precisely, for $x \in \mathbb{R}^{2n}$,
\[
	\|x\|_X = \inf \{\lambda > 0 : \lambda^{-1} x \in X \}.
\]
Therefore, if $X$ has a $C^k$-smooth boundary $\partial X$, for $k \geq 1$, then $f$ is $C^{k-1}$-smooth map, as a map from $\partial X$ to $\mathbb{R}^{2n}$.

For a symplectically self-polar convex body $X \subset \mathbb{R}^{2n}$ the map $f$ acts from $\partial X$ to itself. Therefore, we can consider its composition with itself. Although the next statement is simple, we state it explicitly, since it is used throughout the whole paper.

\begin{lemma} \label{lem:f_compose}
	Let $X \subset \mathbb{R}^{2n}$ be a symplectically self-polar convex body with a $C^1$-smooth boundary. Then $f \circ f = -\id_{\partial X}$.
\end{lemma}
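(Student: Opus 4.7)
The plan is to unpack the defining equation of $f$ twice and use the central symmetry of $X$. The whole argument is essentially a one-liner, so the work is really in making sure the uniqueness in the definition of $f$ is invoked correctly.

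First, I would recall the defining property: for any $y \in \partial X$, the image $f(y)$ is the \emph{unique} point $z \in \partial X$ with $\omega(y,z) = 1$. Uniqueness is the point to pin down: the affine hyperplane $\{z : \omega(y,z) = 1\}$ is a supporting hyperplane of $X^\omega = X$ at $f(y)$, and by Lemma~\ref{lem:strict_conv} the body $X$ is strictly convex under our $C^1$ assumption, so this supporting hyperplane touches $\partial X$ at exactly one point.

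Next, apply this with $y = f(x)$: by definition, $f(f(x))$ is the unique point of $\partial X$ on the hyperplane $\omega(f(x), \cdot) = 1$. I would then verify directly that the candidate $-x$ lies on this hyperplane:
\[
	\omega\bigl(f(x), -x\bigr) = -\omega\bigl(f(x), x\bigr) = \omega\bigl(x, f(x)\bigr) = 1,
\]
using the antisymmetry of $\omega$ in the first step and the definition of $f$ in the last. Since $X$ is symplectically self-polar, it is centrally symmetric (cited from~\cite[Lemma 3.1]{berezovik2022symplectic}), so $-x \in \partial X$. Uniqueness then forces $f(f(x)) = -x$.

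There is no real obstacle here; the only subtle point is making absolutely explicit that the uniqueness in the definition of $f$ is licensed by the strict convexity of $X^\omega = X$, which is exactly what Lemma~\ref{lem:strict_conv} provides in this setting. Once that is in place, the sign flip in $\omega(f(x), -x)$ together with central symmetry closes the argument.
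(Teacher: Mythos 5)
Your proof is correct and follows essentially the same route as the paper's: evaluate $\omega(f(x),-x)$ by antisymmetry of $\omega$, use central symmetry to place $-x$ on $\partial X = \partial X^\omega$, and conclude by the uniqueness in the definition of $f$. The only difference is that you make explicit that this uniqueness rests on strict convexity of $X^\omega = X$ via Lemma~\ref{lem:strict_conv}, a point the paper leaves implicit.
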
 

\begin{proof}
	Let $x \in \partial X$, then by the definition of $f$ we have $\omega(x,f(x)) = 1$. Denote $y = f(x) \in \partial X$, then $\omega(y, -x) = \omega(x,f(x)) = 1$. Since $X$ is centrally symmetric, it follows that $-x \in \partial X = \partial X^\omega$. By the definition of $f$ this means that $f(y) = -x$.
\end{proof}

In addition to Lemma~\ref{lem:strict_conv}, we find that symplectically self-polar bodies with $C^2$-smooth boundaries have positive curvature everywhere.
\begin{lemma}\label{lem:strong_conv}
	Let $X \subset \mathbb{R}^{2n}$ be a symplectically self-polar convex body with $C^2$-smooth boundary, then $\partial X$ has positive curvature at every point (the second fundamental form is positive definite). Moreover, for every point $x \in \partial X$ and non-zero tangent vector $\xi \in T_x \partial X$ we have $\omega(\xi,\nabla_{\xi}f) > 0$, where $\nabla$ is the standard connection in $\mathbb{R}^{2n}$.
\end{lemma}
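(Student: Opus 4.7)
My plan is to reduce both assertions of the lemma to the positive definiteness on $T_x\partial X$ of a single symmetric bilinear form, and then to prove that positive definiteness using the involutive relation $f \circ f = -\id$ from Lemma~\ref{lem:f_compose} together with convexity of the norm $H = \|\cdot\|_X$. The paper has already observed that $f$ is the restriction to $\partial X$ of the Hamiltonian vector field of $H$, i.e., $f = J\nabla H$ on $\partial X$ (with the sign of $J$ forced by the convention $\omega(x,f(x)) = 1$). Differentiating yields $\nabla_\xi f = J D^2 H(x)\xi$, and a direct computation gives
\[
\omega(\xi, \nabla_\xi f) = \langle J\xi, J D^2 H(x)\xi\rangle = \langle \xi, D^2 H(x) \xi\rangle.
\]
The same quadratic form is, up to the positive scalar $1/|\nabla H(x)|$, the second fundamental form of $\partial X = \{H=1\}$ at $x$ with respect to the inner normal, so both statements of the lemma become the single claim that $Q(\xi,\eta) := \omega(\xi, Df_x\eta) = \langle \xi, D^2 H(x)\eta\rangle$ is positive definite on $T_x\partial X$.

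Semi-definiteness is automatic: since $H$ is a norm, it is convex, so $D^2 H(x) \geq 0$ and $Q \geq 0$. It remains to rule out a kernel. Assume $\eta \in T_x\partial X$ satisfies $\omega(\xi, Df_x\eta) = 0$ for all $\xi \in T_x\partial X$. Then $Df_x\eta$ lies in the symplectic orthogonal of $T_x\partial X$, which is the one-dimensional characteristic line $\mathrm{span}(f(x))$. On the other hand, since $f$ maps $\partial X$ to itself, the vector $Df_x\eta$ also lies in
\[
T_{f(x)}\partial X = \{v \in \mathbb{R}^{2n} : \omega(v, f(f(x))) = 0\} = \{v : \omega(v, x) = 0\},
\]
and $f(x)$ is not in this subspace because $\omega(f(x), x) = -\omega(x, f(x)) = -1 \neq 0$. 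Therefore $Df_x \eta = 0$. By Lemma~\ref{lem:f_compose}, $f$ has inverse $-f$; since $\partial X$ is $C^2$, $f$ is a $C^1$-diffeomorphism, so $Df_x$ is a linear isomorphism and $\eta = 0$. Hence $Q$ is non-degenerate, and combined with semi-definiteness it is positive definite, establishing both conclusions simultaneously.

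The step I expect to require the most care is the sign bookkeeping that identifies $f$ as $J\nabla H$ with the right choice of $J$ and yields $\omega(\xi, \nabla_\xi f) = \langle \xi, D^2 H(x)\xi\rangle$ with the correct positive sign; once these conventions are pinned down, the remainder of the argument is essentially formal, using only the convexity of $H$ and the transversality $f(x) \notin T_{f(x)}\partial X$, which is an immediate consequence of $f\circ f = -\id$.
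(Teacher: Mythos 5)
Your proof is correct, and it takes a genuinely different route from the paper's. The paper argues by contradiction with an extremal argument: it writes $f(x)=\lambda(x)\,Jn(x)$ in terms of the outward unit normal, assumes $\nabla_\xi n=0$ at some point, uses the fact that $g(x)=\omega(x^*,f(x))$ is maximized at $x^*$ (self-polarity) to force $\xi(\lambda)=0$, and concludes $f_*\xi=0$, contradicting the fact that $f$ is a $C^1$-diffeomorphism; the inequality $\omega(\xi,\nabla_\xi f)>0$ is then deduced as a second step. You instead exploit the identity $f=J\nabla H$ (which the paper asserts but does not prove; your sign bookkeeping via $\omega(x,f(x))=\langle x,\nabla H(x)\rangle=H(x)=1$ is consistent), which collapses both assertions into positive definiteness of $\langle\cdot,D^2H(x)\,\cdot\rangle$ on $T_x\partial X$. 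Semi-definiteness then comes free from convexity of the norm --- something the paper's argument never uses --- so only non-degeneracy remains, and your intersection argument $(T_x\partial X)^\omega\cap T_{f(x)}\partial X=\mathrm{span}(f(x))\cap\ker\omega(\cdot,x)=\{0\}$ replaces the paper's extremal step (both encode self-polarity: you use that $f$ maps $\partial X$ into $\partial X$, the paper uses that $\omega(\cdot,f(x))\le 1$ on $X$). Both proofs ultimately hinge on invertibility of $Df_x$, which follows from $f\circ f=-\id$. Your version is arguably cleaner and more structural, at the cost of relying on the Hamiltonian-vector-field identification; the paper's is more self-contained in that it works only with the normal field and the defining property of $f$.
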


\begin{proof}
	Observe that $f \colon \partial X \to \partial X$ is a $C^1$-smooth map and $f^2 = -\id_{\partial X}$. Therefore, $f$ is a $C^1$-diffeomorphism of $\partial X$.

	Assume, for the sake of contradiction, that there exists a point $x^* \in \partial X$ and a non-zero vector $\xi \in T_{x^*} \partial X$ such that $\nabla_{\xi} n = 0$, where $n(x)$ is the unit outward-pointing normal vector field on $\partial X$.

	We write $f(x) = \lambda(x) \cdot Jn(x)$ for a positive $C^1$-smooth function $\lambda$ on $\partial X$. Define the function $g \colon \partial X \to \mathbb{R}$ by $g(x) = \omega(x^*,f(x))$. The function $g$ attains its maximum at the point $x^*$, giving us $\xi(g) = 0$, where $\xi(g)$ is the derivative of $g$ with respect to $\xi$. At the same time, we have $$\xi(g) = \xi(\lambda) \cdot \omega(x^* , Jn(x^*)) + \lambda(x^*) \cdot \omega(x^*,J\nabla_{\xi}n) = \xi(\lambda) \cdot \omega(x^* , Jn(x^*)).$$ Therefore, $\xi(\lambda) = 0$.  Hence $f_*(\xi) =0$, since $$ f_*(\xi) = \nabla_\xi(f) = \xi(\lambda) \cdot Jn(x^*) + \lambda(x^*)\cdot J\nabla_\xi n,$$ where $f_*$ is the differential of $f$. 
	This contradicts $f$ being a diffeomorphism. Therefore, $\partial X$ has positive curvature at every point.

	Now consider a point $x \in \partial X$ and a non-zero tangent vector $\xi \in T_x \partial X$, then we get from the previous formula $$\omega(\xi,\nabla_{\xi}f) = \xi(\lambda)\cdot \omega(\xi,Jn(x)) + \lambda(x) \cdot \omega(\xi,J\nabla_{\xi}n).$$ Notice that $\omega(\xi,Jn(x)) = 0$, since $Jn(x)$ is the characteristic direction at the point $x$. Therefore, $\omega(\xi,\nabla_{\xi}f) = \lambda(x) \langle\xi,\nabla_{\xi}n\rangle > 0$, since $\partial X$ has positive curvature at every point.
\end{proof}

\section{4-Periodic trajectories}

Recall the definition of the symplectic outer billiard map~\cite{tabachnikov1995dual,tabachnikov1995billiards}. Consider a strictly convex body $X \subset \mathbb{R}^{2n}$ with a $C^1$-smooth boundary. For every point $z \in \mathbb{R}^{2n} \setminus X$ there exists a unique point $x \in \partial X$ such that the line $zx$ is the tangent characteristic line at the point $x$ and $\omega(x,x-z) > 0$ (orientation of the characteristic line bundle). The symplectic outer billiard map $T \colon \mathbb{R}^{2n} \setminus X \to \mathbb{R}^{2n} \setminus X$ at $z$ is defined as $T(z) = z + 2\cdot(x-z)$.

\begin{figure}[ht]
	\centering
	\includegraphics[width=0.8\textwidth]{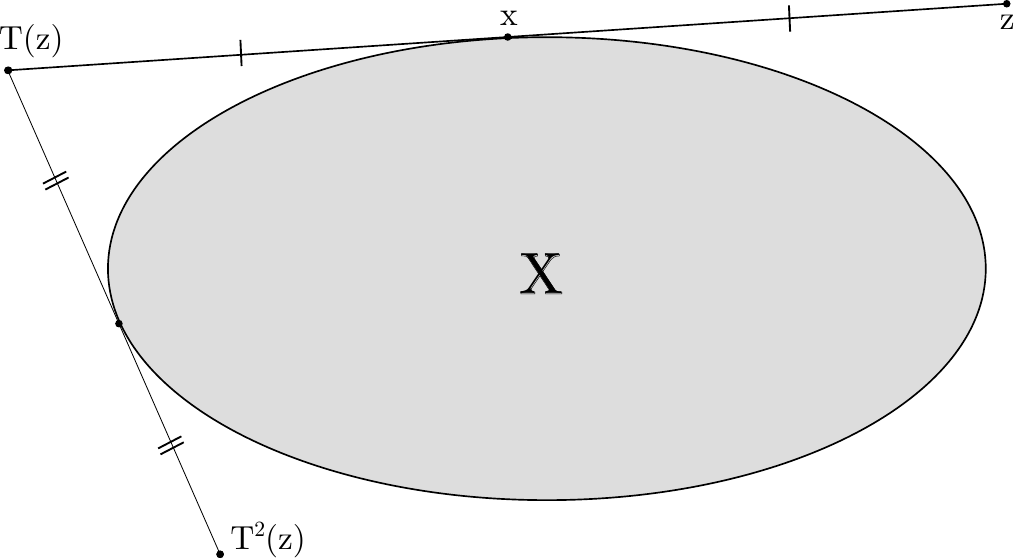}
	\caption{The symplectic outer billiard map in the plane}
	\label{fig:outer_billiard}
\end{figure}

Reversing the orientation of the characteristic line bundle, one gets the inverse map $T^{-1}$. Therefore, the map $T$ is a bijection of $\mathbb{R}^{2n}\setminus X$.

The map $T$ is only continuous in general, if we assume that $\partial X$ is $C^k$-smooth and has positive curvature, then $T$ is a $C^{k-1}$-smooth symplectomorphism.

The existence and non-existence of periodic orbits, and especially 4-periodic orbits are discussed in the recent paper \cite{albers2024outersymplecticbilliards}. Our contribution is given by the following two theorems.

\begin{theorem}\label{thm:main}
	Let $X \subset \mathbb{R}^{2n}$ be a symplectically self-polar convex body with a $C^1$-smooth boundary. Then for every $x \in \partial X$ the following four points
	\[
		z_1(x) = x + f(x),\  z_2(x) = -x + f(x),\  z_3(x) = -x -f(x),\  z_4(x) = x-f(x)
	\] 
	are vertices of a parallelogram of symplectic area 4, which	form a centrally-symmetric closed outer billiard trajectory, i.e. $Tz_i = z_{i+1}$, where $z_5 = z_1$.
\end{theorem}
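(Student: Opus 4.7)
\smallskip

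The plan is to directly verify the four equalities $T z_i = z_{i+1}$ from the definition of the outer billiard map, by exhibiting an explicit candidate tangency point for each step. The natural guess is that the tangency point producing $z_{i+1}$ from $z_i$ is the midpoint of the segment $z_iz_{i+1}$: these midpoints are $f(x),\ -x,\ -f(x),\ x$ (in order), and all four lie on $\partial X$ since $X$ is centrally symmetric and $f$ maps $\partial X$ to itself.

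Next I would check, step by step, that each midpoint satisfies the two conditions required by the definition of $T$: first, that the line from $z_i$ through the midpoint is the characteristic line at that midpoint; and second, the orientation condition $\omega(x',x'-z_i)>0$, where $x'$ denotes the midpoint. The direction check uses the fact recalled in the preliminaries that the characteristic line at a point $p \in \partial X$ is parallel to $f(p)$, together with Lemma~\ref{lem:f_compose}, which gives $f(f(x))=-x$, $f(-x)=-f(x)$, $f(-f(x))=x$. For instance, at $p=f(x)$ the characteristic direction is spanned by $f(f(x))=-x$, and $z_1-f(x)=x$ is indeed collinear with it; the orientation check then reduces to $\omega(f(x),-x)=\omega(x,f(x))=1>0$. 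The other three steps are identical in form and reduce to the same scalar $\omega(x,f(x))=1$.

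Once $Tz_i=z_{i+1}$ is established for all four indices, the trajectory is closed by construction. Central symmetry is immediate, since $z_3=-z_1$ and $z_4=-z_2$ by inspection. For the parallelogram claim I would compute the two edge vectors emanating from $z_1$, namely $z_2-z_1=-2x$ and $z_4-z_1=-2f(x)$, so that opposite sides are parallel; the symplectic area is $|\omega(-2x,-2f(x))|=4\,|\omega(x,f(x))|=4$.

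I do not anticipate a serious obstacle: the whole argument is a bookkeeping exercise once one realizes that the four tangency points must be the midpoints. The only point that requires a moment of care is verifying the orientation condition (so that one does not accidentally describe the trajectory in the direction of $T^{-1}$ rather than $T$); this is handled uniformly by the identity $\omega(x,f(x))=1$ together with the symmetry relations $f(-x)=-f(x)$ and $f(f(x))=-x$ supplied by Lemma~\ref{lem:f_compose}.
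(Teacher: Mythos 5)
Your proposal is correct and follows essentially the same route as the paper: identify the midpoints $f(x), -x, -f(x), x$ as the tangency points, use the fact that the characteristic direction at $p\in\partial X$ is spanned by $f(p)$ together with Lemma~\ref{lem:f_compose} to verify each step $Tz_i=z_{i+1}$, and handle the remaining cases by central symmetry. Your explicit verification of the orientation condition and of the area-4 claim is slightly more detailed than the paper's proof, but the underlying argument is identical.
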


\begin{proof}
	Let us show that $Tz_4 = z_1$. It was observed before that the vector $f(x)$ gives the characteristic direction at the point $x$. Therefore, the line $z_4z_1$ coincides with the characteristic line at the point $x$ and the point $x$ divides the segment $[z_4,z_1]$ in half. From the definition of the map $T$ we have $Tz_4 = z_1$.

	By Lemma~\ref{lem:f_compose} the characteristic direction at the point $y = f(x) \in \partial X$ is given by the vector $f(y) = f(f(x)) = -x$. Therefore, $Tz_1 = z_2$. The other cases follow from central symmetry.
\end{proof}

\begin{theorem}\label{thm:main_reverse}
	Let $X \subset \mathbb{R}^{2n}$ be a centrally symmetric strictly convex body with a $C^2$-smooth boundary. Suppose that for every point $x$ of $\partial X$ there exists centrally symmetric $4$-periodic trajectory of $T$ which passes through $x$ (i.e. $x = (z+Tz)/2$ for some point $z$ on this trajectory). Then $X = \alpha X^\omega$ for some $\alpha > 0$, so that $\frac{1}{\sqrt \alpha} X$ is symplectically self-polar.
\end{theorem}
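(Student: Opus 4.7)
\emph{Plan.} The plan is to extract from the hypothesis a $C^{1}$ map $g\colon \partial X \to \partial X$ such that the centrally symmetric $4$-periodic orbit passing through $x$ has the form $\{\pm x \pm g(x)\}$, and then to prove that $\omega(x, g(x))$ is constant on $\partial X$; this constant will play the role of the scaling $\alpha$.

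\textbf{Extracting $g$.} A centrally symmetric $4$-periodic orbit has vertices $\{z,\, Tz,\, -z,\, -Tz\}$; writing $z = x - y$ and $Tz = x + y$, we have $x = (z + Tz)/2 \in \partial X$ by hypothesis, and the next midpoint $y = (Tz + (-z))/2$ also lies in $\partial X$. The billiard conditions $T(x - y) = x + y$ and $T(x + y) = -x + y$ translate into: (a) $y$ is parallel to the characteristic direction at $x$; (b) $x$ is parallel to the characteristic direction at $y$. Together with the orientation condition $\omega(x, y) > 0$ built into the definition of $T$, condition (a) determines $y \in \partial X$ uniquely, yielding a well-defined map $g(x) := y$ satisfying $g \circ g = -\id_{\partial X}$ by (b). The $C^{2}$-smoothness of $\partial X$ makes the characteristic line field of class $C^{1}$, so writing $g(x) = \lambda(x)\cdot Jn(x)$ with $\lambda(x) = 1/\|Jn(x)\|_X$ and using that $\|\cdot\|_X$ is $C^{2}$ off the origin shows that $g$ is $C^{1}$.

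\textbf{Key computation.} For every tangent vector $\xi \in T_x \partial X$ one has
\[
	\xi\bigl(\omega(x, g(x))\bigr) \;=\; \omega(\xi, g(x)) + \omega(x, Dg(\xi)).
\]
The first summand vanishes by (a): the vector $g(x)$ belongs to $\ker \omega|_{T_x \partial X}$, hence is $\omega$-orthogonal to every $\xi \in T_x \partial X$. The second summand vanishes by (b): the differential $Dg(\xi)$ lies in $T_{g(x)} \partial X$ because $g$ maps $\partial X$ into itself, and $x$ belongs to $\ker \omega|_{T_{g(x)} \partial X}$. Thus $\omega(x, g(x))$ is locally constant on the connected hypersurface $\partial X$; call this constant $\alpha$, which is positive by the orientation condition.

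\textbf{Conclusion.} By construction, $\alpha^{-1} g(x)$ is parallel to the characteristic direction at $x$ and satisfies $\omega(x, \alpha^{-1} g(x)) = 1$, so $\alpha^{-1} g(x)$ coincides with $f(x)$ from the definition preceding Lemma~\ref{lem:f_compose}. Therefore
\[
	\partial X^\omega \;=\; f(\partial X) \;=\; \alpha^{-1}\, g(\partial X) \;=\; \alpha^{-1}\, \partial X,
\]
which gives $X = \alpha X^\omega$ as required. The main obstacle is conceptual rather than computational: one must recognize that the two tangential conditions (a) and (b) produced by the $4$-periodic hypothesis conspire to annihilate both terms in the derivative of $\omega(x,g(x))$. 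The $C^{2}$-smoothness is used only to guarantee that $g$ is differentiable, which is precisely what might fail in the merely $C^{1}$ case and explains the authors' caveat.
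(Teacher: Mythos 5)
Your proposal is correct and takes essentially the same route as the paper: you extract the same normalized map $g(x)=f(x)/\|f(x)\|_X$ from the centrally symmetric $4$-periodic orbits, and your chain-rule computation $\xi\bigl(\omega(x,g(x))\bigr)=\omega(\xi,g(x))+\omega(x,Dg(\xi))=0$ is precisely an unpacking of the paper's observation that each $\bigl(x,f(x)/\|f(x)\|_X\bigr)$ is a critical point of $\omega|_{\partial X\times\partial X}$ along the $C^1$ graph of $g$. The identification of where $C^2$-smoothness enters (differentiability of $g$, which fails in the $C^1$ case) also matches the paper's remark.
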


\begin{proof}
	Consider some $x \in \partial X$. Take point $z$ on the corresponding centrally symmetric $4$-periodic trajectory such that $x = (z+Tz)/2$, and consider point $y = (Tz + T^2 z)/2 \in \partial X$. From the central symmetry of the trajectory we have $y = (Tz - z)/2 =x - z$, i.e. $y$ is parallel to the characteristic line at $x$, so now $y$ may not coincide with $f(x)$ but still co-directed, so $y = f(x)/\|f(x)\|_X$. For the same reason $x$ is parallel to the characteristic line at $y$.

	Therefore, the equality $\omega(x,\cdot) = \omega(x,y)$ determines the tangent hyperplane of $\partial X$ at $x$ and the equality $\omega(\cdot,y) = \omega(x,y)$ determines the tangent hyperplane of $\partial X$ at $y$.

	As a result, we have that for every $x \in \partial X$ the point
	\[
	\left(x,\frac{f(x)}{\|f(x)\|_X}\right)
	\]
	is critical for the function $\omega|_{\partial X \times \partial X}$. Note that in our setting $f(x)/\|f(x)\|_X$ smoothly depends on $x$.{} Therefore, \[\omega\left(x,\frac{f(x)}{\|f(x)\|_X}\right) = \alpha\] for every $x \in \partial X$ and some $\alpha > 0$. On the other hand, from the definition of $f(x)$ we have
	\[\omega\left(x,\frac{f(x)}{\|f(x)\|_X}\right) = 1/\|f(x)\|_X,\]
	i.e. $\|f(x)\|_X = 1/\alpha$ for every $x \in \partial X$. Using this, it is easy to show that $X = \alpha X^\omega$.
\end{proof}

\begin{remark}
	Note that if $X$ has only $C^1$-smooth boundary, this proof does not work. The set
	\[
		\left\{\left(x,\frac{f(x)}{\|f(x)\|_X}\right): x \in \partial X\right\}
	\]
	is not a $C^1$-smooth connected submanifold of $\partial X \times \partial X$ anymore. It is a path-connected subset of the critical points of $\omega$, but it is not enough to conclude  that $\omega|_{\partial X \times \partial X}$ is constant on this subset~\cite{Whitney}.
\end{remark}

\begin{remark}
	Theorem~\ref{thm:main} and Theorem~\ref{thm:main_reverse} can be formulated and proved in a similar way for symplectic billiards. For the definition and properties of symplectic billiards see~\cite{albers2018introducing}.
\end{remark}

\section{Invariant hypersurface}\label{sec:invariant_hypersurface}

 Throughout this section we assume $X \subset \mathbb{R}^{2n}$ to be a symplectically self-polar convex body. Theorem~\ref{thm:main} and Lemma~\ref{lem:f_compose} imply that for such an $X$ with a $C^1$-smooth boundary the set $Y = \{x + f(x): x \in \partial X\}$ is invariant under an outer billiard map. Indeed, $$T(x + f(x)) = -x + f(x) = f(x) + f(f(x)) \in Y.$$ In this section we discuss the geometric properties of $Y$.

\begin{lemma}\label{lem:top_sphere}
	The set $Y$ is a topological sphere, and $X$ lies in the bounded component of $\mathbb{R}^{2n} \setminus Y$. If $X$ has $C^k$-smooth boundary for $k \geq 2$, then $Y$ is a $C^{k-1}$-embedded submanifold.
\end{lemma}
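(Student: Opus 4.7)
The plan is to exhibit the natural map $g \colon \partial X \to \mathbb{R}^{2n}$, $g(x) = x + f(x)$, as a topological embedding onto $Y$, then upgrade this to a $C^{k-1}$-embedding under the smoothness hypothesis, and finally deduce that $X$ lies in the bounded component of $\mathbb{R}^{2n} \setminus Y$ by a winding-number argument around the origin.

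For the first step I would verify that $g$ is injective. The identities $\omega(x, f(x)) = 1$ and $\omega(f(x), f(x)) = 0$ give $\omega(g(x), f(x)) = 1$, so $g(x)$ lies on the tangent hyperplane to $\partial X$ at $x$. By strict convexity (Lemma~\ref{lem:strict_conv}) this hyperplane meets $X$ only at $x$, and $g(x) \neq x$ since $f(x) \neq 0$, so $g(x) \in \mathbb{R}^{2n} \setminus X$. Hence $T(g(x))$ is defined, and by Theorem~\ref{thm:main} it equals $-x + f(x)$. An equality $g(x_1) = g(x_2)$ therefore forces both $x_1 + f(x_1) = x_2 + f(x_2)$ and $-x_1 + f(x_1) = -x_2 + f(x_2)$, yielding $x_1 = x_2$. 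Since $\partial X$ is compact and $g$ is a continuous injection into the Hausdorff space $\mathbb{R}^{2n}$, it is a topological embedding, so $Y$ is a topological $(2n-1)$-sphere disjoint from $X$.

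When $k \geq 2$, $f$ is $C^{k-1}$-smooth, and hence so is $g = \id + f$. To promote $g$ to an embedding it suffices to show it is an immersion: if $dg_x(\xi) = \xi + \nabla_\xi f = 0$ for some $\xi \in T_x \partial X$, then pairing with $\omega(\xi, \cdot)$ yields $\omega(\xi, \nabla_\xi f) = -\omega(\xi, \xi) = 0$, which forces $\xi = 0$ by Lemma~\ref{lem:strong_conv}. An injective immersion from a compact manifold to a Hausdorff space is automatically a smooth embedding, so $Y$ becomes a $C^{k-1}$-submanifold. For the final claim, Jordan--Brouwer gives two components of $\mathbb{R}^{2n} \setminus Y$, and since $X$ is connected and disjoint from $Y$, it sits in one of them. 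I plan to locate the origin (interior to $X$) in the bounded component via the homotopy $H(x, t) = x + t f(x)$, $t \in [0, 1]$, which connects the inclusion $\partial X \hookrightarrow \mathbb{R}^{2n} \setminus \{0\}$ to $g$. This homotopy avoids $0$: if $x + t f(x) = 0$ with $t > 0$ then $\omega(x, f(x)) = -\omega(x, x)/t = 0$ contradicts $\omega(x, f(x)) = 1$, while $t = 0$ gives $x \neq 0$ since $0 \in \inte X$. The normalized map at $t = 0$ is a homeomorphism $\partial X \to S^{2n-1}$ of degree $\pm 1$ (as $X$ is star-shaped at the origin), so by homotopy invariance $Y$ has nonzero winding number around $0$, whence $0$ and therefore $X$ lie in the bounded component.

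I expect the main subtle point to be the injectivity of $g$, since the bare equation $x_1 + f(x_1) = x_2 + f(x_2)$ does not yield to direct convex-geometric manipulation on $f$ alone; the cleanest route I see is to exploit the outer billiard dynamics via Theorem~\ref{thm:main} to extract a second equation and then subtract.
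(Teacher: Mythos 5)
Your proposal is correct and follows essentially the same route as the paper: injectivity of $g(x)=x+f(x)$ via the outer billiard map and Theorem~\ref{thm:main}, Jordan--Brouwer plus the homotopy $x+tf(x)$ to locate the origin in the bounded component, and the immersion criterion $\omega(\xi,\nabla_\xi f)>0$ from Lemma~\ref{lem:strong_conv} for the $C^{k-1}$-embedding. The only difference is that you spell out a few points the paper leaves implicit (why $g(x)\notin X$ so that $T$ is defined there, and the compactness argument upgrading an injective continuous map, resp.\ injective immersion, to an embedding), which is harmless.
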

\begin{proof}
	Consider the map $g \colon \partial X \to \mathbb{R}^{2n}$ defined as $g(x) = x+f(x)$. Notice that this map is injective, because the map $T$ is injective. Therefore, $Y$ is a topological sphere and from the Jordan--Brouwer separation theorem, $\mathbb{R}^{2n}\setminus Y$ has two connected components.

	Notice that $Y$ does not intersect $X$, since $X$ is strictly convex. Let us define the homotopy $h_t(x) = x + t\cdot f(x)$ for $0 \leq t \leq 1$ and $x \in \partial X$. Observe that $h_0 = \id_{\partial X}$, $h_1 = g$ and $h_t(x) \neq 0$. Therefore, the winding number of $g$ with respect to the origin coincides with the winding number of $\id_{\partial X}$ with respect to the origin which is obviously odd. It then follows that the origin belongs to the bounded component of $\mathbb{R}^{2n}\setminus Y$. This implies that $X$ lies in a bounded component of $\mathbb{R}^{2n}\setminus Y$.

	Assume now that $X$ has a $C^k$-smooth boundary for $k \geq 2$, we are going to show that the $C^{k-1}$-smooth map $g(x) = x + f(x)$ has non-degenerate differential at every point of the boundary $\partial X$. Let $\xi$ be a non-zero tangent vector to $\partial X$ at some point. If $g_*(\xi) = 0$, then  $\xi + \nabla_{\xi}f = 0$. Therefore, $0 = \omega(\xi, \xi + \nabla_{\xi}f) = \omega(\xi,\nabla_{\xi}f)$. But from Lemma~\ref{lem:strong_conv} we have $\omega(\xi,\nabla_{\xi}f) > 0$, contradiction. Thus, $Y$ is a $C^{k-1}$-smooth embedded submanifold.
\end{proof}

\begin{theorem}{\label{thm:starbody}} If $\partial X$ is $C^1$-smooth, then $Y$ is a boundary of a star-shaped (not necessarily convex) body. Moreover, if $\partial X$ is $C^2$-smooth, then every radial ray is transversal to $Y$.
\end{theorem}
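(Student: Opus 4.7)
The plan is to address the two halves of the theorem in turn, both via $\omega$-pairing computations that exploit the identity $X = X^\omega$.

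For the star-shapedness in the $C^1$ case, by Lemma~\ref{lem:top_sphere} the set $Y$ is a topological sphere with the origin in its bounded complementary component, so every ray from the origin hits $Y$ at least once; the content is therefore uniqueness. I would suppose $x_1 + f(x_1) = \lambda(x_2 + f(x_2))$ for some $x_1, x_2 \in \partial X$ and $\lambda > 0$, and aim to conclude $\lambda = 1$ and $x_1 = x_2$. Pairing both sides symplectically in turn with $x_1$, $x_2$, $f(x_1)$, and $f(x_2)$, and using $\omega(x_i, f(x_i)) = 1$ together with antisymmetry, I obtain, writing $a = \omega(x_1, x_2)$, $b = \omega(x_1, f(x_2))$, $c = \omega(f(x_1), x_2)$, $d = \omega(f(x_1), f(x_2))$, the four relations
\[
a + b = 1/\lambda, \quad a + c = -\lambda, \quad b + d = \lambda, \quad c + d = -1/\lambda.
\]
The decisive input is that since $X = X^\omega$, any pair $u, v \in \partial X$ satisfies $|\omega(u,v)| \leq 1$, with equality $\omega(u,v) = 1$ forcing $v = f(u)$ by strict convexity. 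Combining $|b|, |c| \leq 1$ with the first two relations yields $1/\lambda - 1 \leq a \leq 1 - \lambda$, which is compatible only when $\lambda + 1/\lambda \leq 2$; AM--GM then forces $\lambda = 1$, $a = 0$, $b = 1$, and hence $f(x_2) = f(x_1)$, so $x_1 = x_2$.

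For the transversality in the $C^2$ case, $Y$ is a $C^1$-submanifold by Lemma~\ref{lem:top_sphere}, and the radial ray at $z = x + f(x) \in Y$ is transversal to $Y$ precisely when the position vector $z$ does not lie in $T_z Y = (\mathrm{id} + \nabla f)(T_x \partial X)$. I would suppose, for contradiction, $x + f(x) = \xi + \nabla_\xi f$ for some $\xi \in T_x \partial X$. Then $\xi - x = -(\nabla_\xi f - f(x))$, so $\omega(\xi - x, \nabla_\xi f - f(x)) = 0$ by antisymmetry. On the other hand, expanding $\omega(\xi - x, \nabla_\xi f - f(x))$ and using $\omega(x, f(x)) = 1$, the vanishing $\omega(\xi, f(x)) = 0$ (since $f(x)$ spans the characteristic line, the kernel of $\omega|_{T_x \partial X}$), and $\omega(x, \nabla_\xi f) = 0$ (obtained by differentiating $\omega(y, f(y)) \equiv 1$ along $\xi$), I get $\omega(\xi, \nabla_\xi f) + 1 = 0$. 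This contradicts the positivity $\omega(\xi, \nabla_\xi f) > 0$ from Lemma~\ref{lem:strong_conv}, noting $\xi \neq 0$ because $z \neq 0$.

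The hard part is the algebraic step pinning down $\lambda = 1$: the four symplectic pairings are designed so that the constraints $|b|, |c| \leq 1$ yield an AM--GM inequality on $\lambda + 1/\lambda$. Everything else is bookkeeping with the identities already established in the earlier lemmas.
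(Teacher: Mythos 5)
Your proposal is correct, and the star-shapedness half takes a genuinely different route from the paper's. For that half the paper decomposes $\mathbb{R}^{2n} = V \oplus V^{\omega}$ with $V = \mathop{\text{span}}\{x, f(x)\}$, expands $y$ and $f(y)$ in coordinates $\alpha_i, \beta_i$ on $V$, and solves the system $\alpha_1\beta_2 - \alpha_2\beta_1 = 1$, $\alpha_1+\alpha_2 = \beta_1+\beta_2 = t \geq 1$, $0 \leq \alpha_i,\beta_i \leq 1$ to force $t=1$; you instead pair the ray relation directly with $x_1, x_2, f(x_1), f(x_2)$, reduce to a purely linear system in the four pairings $a,b,c,d$ together with the self-polarity bounds $|b|,|c| \leq 1$, and close with AM--GM on $\lambda + 1/\lambda$. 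Both arguments rest on the same two inputs --- $|\omega(u,v)| \leq 1$ on $\partial X \times \partial X$ and $\omega(x,f(x))=1$ --- but yours avoids the symplectic-orthogonal decomposition and the quadratic determinant identity, handles all $\lambda>0$ symmetrically without normalizing to $\lambda \geq 1$, and pins down $x_1 = x_2$ directly from $\omega(x_1, f(x_2)) = 1$ (via uniqueness of $f$, i.e.\ strict convexity) rather than through injectivity of $x \mapsto x + f(x)$; what the paper's version buys is the geometric picture that $y$ and $f(y)$ are forced to lie in the plane $V$. For the transversality half your computation is the paper's in light disguise: the paper writes $\xi = f(x)+\eta$, $\nabla_\xi f = x - \eta$ with $\eta \in V^\omega$ to get $\omega(\xi,\nabla_\xi f) = -1$, while you reach the same value from $\omega(\xi - x, \nabla_\xi f - f(x)) = 0$ combined with $\omega(\xi, f(x)) = 0$ and the differentiated identity $\omega(x, \nabla_\xi f) = 0$; both then contradict Lemma~\ref{lem:strong_conv}. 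All the auxiliary facts you invoke (existence of an intersection point on each ray from Lemma~\ref{lem:top_sphere}, $C^1$-smoothness of $Y$ with $T_zY = (\id + \nabla f)(T_x\partial X)$, and $\xi \neq 0$ since $x+f(x) \neq 0$) are available exactly as you use them.
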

\begin{proof}
	To prove that $Y$ is a boundary of a star-shaped body, we have to prove that every ray of the form $\{tv: t \in [0,+\infty)\}$ intersects $Y$ exactly at one point for every $v \in \mathbb{R}^{2n}\setminus\{0\}$. From Lemma~\ref{lem:top_sphere}, we know that for every $v \in \mathbb{R}^{2n}\setminus\{0\}$ the corresponding ray intersects $Y$ at least at one point. Assume now that  $y+f(y) = t(x + f(x))$ for some $x,y \in \partial X$ and $t \geq 1$. Consider the symplectic subspace $V = \mathop{\text{span}} \{x, f(x)\}$, then $\mathbb{R}^{2n} = V \oplus V^{\omega}$, where $V^{\omega} = \{u \in \mathbb{R}^{2n} :\, \omega(u,w) = 0\ \forall w \in V\}$. Let $y = y^\parallel + y^\perp$ and $f(y) = f(y)^\parallel + f(y)^\perp$ with respect to this decomposition. From $y+f(y) \in V$ we have $y^\perp + f(y)^\perp = 0$. Since $y^\parallel, f(y)^\parallel \in V$, we can write them as
	\begin{align*}
		y^\parallel    & = \alpha_1 x + \beta_1 f(x), \\
		f(y)^\parallel & = \alpha_2 x + \beta_2 f(x).
	\end{align*}
	Note that $|\alpha_i|,|\beta_i|\leq 1$. Indeed, $x,y,f(x),f(y) \in \partial X$ and $X$ is symplectically self-polar, therefore
	\begin{align*}
		|\alpha_1| & = |\omega(y^\parallel, f(x))| = |\omega(y,f(x))| \leq 1,       \\
		|\beta_1|  & = |\omega(y^\parallel, x)| = |\omega(y,x)| \leq 1,             \\
		|\alpha_2| & = |\omega(f(y)^\parallel, f(x))| = |\omega(f(y),f(x))| \leq 1, \\
		|\beta_2|  & = |\omega(f(y)^\parallel, x)| = |\omega(f(y),x)| \leq 1.
	\end{align*}

	Since $y+f(y) = t(x + f(x))$ for $t \geq 1$ we also have $\alpha_1 + \alpha_2 = \beta_1 + \beta_2 = t \geq 1$, therefore $0 \leq \alpha_i,\beta_i \leq 1$. Using $y^\perp + f(y)^\perp = 0$, we have $(\alpha_1\beta_2 - \alpha_2\beta_1) = \omega(y^\parallel, f(y)^\parallel) = \omega(y,f(y)) = 1$. Thus
	\[
		\begin{cases}
			\alpha_1\beta_2 - \alpha_2\beta_1 = 1,              \\
			\alpha_1 + \alpha_2 = \beta_1 + \beta_2 = t \geq 1, \\
			0 \leq \alpha_i,\beta_i \leq 1.
		\end{cases}
	\]
	It is easy to see that this system has a solution only if $t = 1$ and the solution is $(\alpha_1, \beta_1, \alpha_2, \beta_2) = (1,0,0,1)$. Therefore, $x + f(x) = y + f(y)$, meaning $x = y$.

	Now, let $\partial X$ be $C^2$-smooth. Notice that the tangent space of $Y$ at the point $x + f(x)$ for $x \in \partial X$ can be described as $\{\xi + \nabla_\xi f: \xi \in T_x\partial X\}$. Consider the symplectic subspaces $V  = \mathop{\text{span}} \{x,f(x)\}$ and $V^\omega$. Remember that $f(x)$ and $x$ give characteristic directions in $T_x \partial X$ and $T_{f(x)} \partial X$ respectively. Therefore, by the definition of characteristic direction we have
	\begin{align*}
		 & T_{x} \partial X = \langle f(x) \rangle^\omega = \langle f(x) \rangle \oplus V^\omega, \\
		 & T_{f(x)} \partial X = \langle x \rangle^\omega = \langle x \rangle \oplus V^\omega.
	\end{align*}
	Arguing by contradiction, assume that there exists $x \in \partial X$ and $\xi \in T_x \partial X$ such that
	$$\xi + \nabla_{\xi} f = x + f(x).$$
	Using this and the fact that $\xi \in T_{x} \partial X$, $\nabla_{\xi}f \in T_{f(x)} \partial X$, together with decomposition of these spaces one can obtain that
	\begin{align*}
		\xi          & = f(x) + \eta, \\
		\nabla_\xi f & = x - \eta,
	\end{align*}
	for some $\eta \in V^\omega$. Therefore,
	\[
		\omega(\xi,\nabla_{\xi}f) = \omega(f(x),x) = -1.
	\]
	But this contradicts Lemma~\ref{lem:strong_conv}.
\end{proof}

\begin{lemma}{\label{lem:segment}}
	Every line of the form $\{x + tf(x): t \in \mathbb{R}\}$ intersects $Y$ exactly at two points $x + f(x)$ and $x-f(x)$ for every $x \in \partial X$. If $\partial X$ is $C^2$-smooth, then every such line is transversal to $Y$.
\end{lemma}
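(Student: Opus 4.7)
The plan is to proceed in three steps: first identify the two claimed points, then use the symplectic decomposition $\mathbb{R}^{2n} = V \oplus V^\omega$ introduced in the proof of Theorem \ref{thm:starbody} together with strict convexity to rule out any further intersections, and finally deduce transversality from Lemma \ref{lem:strong_conv}. For the first step, $x + f(x) \in Y$ holds by definition of $Y$. For $x - f(x)$: central symmetry of $X$ gives $-f(x) \in \partial X$, and the identity $\omega(-x, -f(x)) = \omega(x, f(x)) = 1$ together with uniqueness of $f$ yields $f(-x) = -f(x)$; combining with Lemma \ref{lem:f_compose} gives $f(-f(x)) = -f(f(x)) = x$, so $(-f(x)) + f(-f(x)) = x - f(x) \in Y$. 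These correspond to $t = \pm 1$ on the line.

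For the second step, suppose $x + tf(x) = y + f(y)$ for some $y \in \partial X$ and $t \in \mathbb{R}$. Take $V = \mathrm{span}\{x, f(x)\}$ (symplectic since $\omega(x, f(x)) = 1$) and write $y = \alpha_1 x + \beta_1 f(x) + y^\perp$, $f(y) = \alpha_2 x + \beta_2 f(x) + f(y)^\perp$ in $V \oplus V^\omega$. Self-polarity forces $|\alpha_i|, |\beta_i| \le 1$, and matching components of $x + tf(x) = y + f(y)$ gives $\alpha_1 + \alpha_2 = 1$, $\beta_1 + \beta_2 = t$, and $y^\perp + f(y)^\perp = 0$. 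Since $V$ and $V^\omega$ are $\omega$-orthogonal, the identity $\omega(y, f(y)) = 1$ reduces to $\alpha_1\beta_2 - \alpha_2\beta_1 = 1$, equivalently $\beta_1 = \alpha_1 t - 1$. A short case analysis on the four self-polarity inequalities forces $\alpha_1 \in \{0, 1\}$ whenever $t \ne 0$. If $\alpha_1 = 1$, then $\omega(y, f(x)) = 1$ places $y$ on the supporting hyperplane of $X$ at $x$, so by strict convexity $y = x$ and $t = 1$. If $\alpha_1 = 0$, then $\omega(y, x) = 1$ places $y$ on the supporting hyperplane at $-f(x)$ (using $f(-f(x)) = x$), so $y = -f(x)$ and $t = -1$. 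The case $t = 0$ is ruled out because $y + f(y) = x \in \partial X$ would put $y + f(y)$ on the supporting hyperplane of $X$ at $y$, forcing $f(y) = 0$ by strict convexity.

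For the transversality step under the $C^2$ assumption, the tangent space of $Y$ at $x + f(x)$ is $\{\xi + \nabla_\xi f : \xi \in T_x \partial X\}$, and $T_x \partial X = \langle f(x) \rangle^\omega$ since $f(x)$ is the characteristic direction at $x$. If $f(x) = \xi + \nabla_\xi f$ for some $\xi \in T_x \partial X$, then $0 = \omega(\xi, f(x)) = \omega(\xi, \nabla_\xi f)$, contradicting Lemma \ref{lem:strong_conv} unless $\xi = 0$; and $\xi = 0$ then forces $f(x) = 0$. At $x - f(x)$, parameterize via $x' = -f(x)$ so that $T_{x'}\partial X = \langle x \rangle^\omega$; if $f(x) = \xi' + \nabla_{\xi'} f$, then using $\nabla_{\xi'} f \in T_x \partial X = \langle f(x) \rangle^\omega$ and pairing with $f(x)$ via $\omega$ yields $\omega(\xi', f(x)) = 0$, after which the same contradiction with Lemma \ref{lem:strong_conv} applies.

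The main obstacle I anticipate is the case analysis in Step 2: the linear and determinantal constraints alone admit a one-parameter family of solutions, and one must exploit the four self-polarity inequalities carefully to pin down $\alpha_1 \in \{0, 1\}$, so that strict convexity via the supporting-hyperplane characterization can upgrade a $V$-projection identity into a pointwise equality for $y$.
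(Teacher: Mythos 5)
Your proof is correct, but the first half takes a genuinely different route from the paper's. For the claim that the line meets $Y$ in exactly the two points $x\pm f(x)$, the paper gives a short soft argument: the forward rays $\{y+tf(y):t\ge 0\}$ based at distinct points of $\partial X$ are pairwise disjoint because each exterior point has a unique positively co-oriented tangent characteristic line (this is exactly what makes $T$ well defined and injective), and likewise for the backward rays via $T^{-1}$; combined with $Y=\{y+f(y)\}=\{y-f(y)\}$ this immediately identifies the two intersection points. You instead re-run the quantitative $V\oplus V^\omega$ computation of Theorem \ref{thm:starbody} for arbitrary $t\in\mathbb{R}$, and you correctly notice that the linear and determinantal constraints alone leave a one-parameter family (e.g. $\alpha_1=1$, $\beta_1=t-1$ for $t\in[0,2]$), so you close the gap with the supporting-hyperplane/strict-convexity step ($\omega(y,f(x))=1\Rightarrow y=x$, resp. $\omega(y,x)=1\Rightarrow y=-f(x)$) — a step the paper's Theorem \ref{thm:starbody} did not need because there $t\ge 1$ forces all coefficients nonnegative. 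Your version is longer but self-contained and makes the role of strict convexity explicit; the paper's leans on the global foliation of the exterior by tangent characteristic rays. For transversality the two arguments are essentially the same (both reduce to $\omega(\xi,\nabla_\xi f)=0$, contradicting Lemma \ref{lem:strong_conv}); yours is in fact slightly slicker at $x+f(x)$, since pairing the equation directly with $f(x)$ avoids the explicit $\eta\in V^\omega$ decomposition, and you spell out the $x-f(x)$ case that the paper dismisses as analogous.
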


\begin{proof}
	First, notice that the rays  $\{x + tf(x): t \in [0,+\infty)\}$ and $\{y + tf(y): t \in [0,+\infty)\}$ do not intersect for different $x,y \in \partial X$, because $T$ is an injective map, in particular $y + f(y) \notin \{x + tf(x): t \in [0,+\infty)\}$. In the same way, considering $T^{-1}$ instead of $T$ one can prove that $y - f(y) \notin \{x - tf(x): t \in [0,+\infty)\}$ for different $x,y \in \partial X$. Combine it with $Y = \{y + f(y): y \in \partial X\} = \{y - f(y): y \in \partial X\}$, we obtain the desired result.

	Now, let $\partial X$ be $C^2$-smooth. Arguing by contradiction, assume that the line $\{x + tf(x): t \in \mathbb{R}\}$ intersects $Y$ not transversally at the point $x + f(x)$ for $x \in \partial X$ (case $x - f(x)$ is analogous). Following the proof of Theorem~\ref{thm:starbody}, there exists $\xi \in T_{x} \partial X$ such that $$\xi + \nabla_\xi f = f(x),$$ which implies
	\begin{align*}
		\xi = f(x) + \eta, \\
		\nabla_\xi f = -\eta,
	\end{align*}
	for $\eta \in V^\omega$, where $V = \langle x , f(x)\rangle$. Therefore, $\omega(\xi, \nabla_{\xi}f) = 0$, contradiction with Lemma~\ref{lem:strong_conv}.
\end{proof}

\begin{remark}
	For a $C^2$-smooth planar curve of positive curvature, the outer billiard map is a twist symplectic map of the phase cylinder, with respect to two sets of symplectic coordinates. The first set is the so-called envelope coordinates \cite{MR1419453}, and the second one is the symplectic polar coordinates \cite{MR4722194}. It then follows from Birkhoff theorem,  that any invariant curve must be a graph of a Lipschitz function with respect to the vertical foliations of these coordinate systems. This argument implies Theorem~\ref{lem:segment} and Lemma~\ref{thm:starbody} for the $C^2$-smooth planar case. To the best of our knowledge, it is not known if there are reminiscences of the twist condition and Birkhoff theorems for higher-dimensional outer billiards.
\end{remark}

The next theorem and the corollary show that for non-trivial symplectic self-polar bodies, one cannot always move the parallelogram of 4-periodic orbits in such a way that the tangency points move along the characteristics of $X$.

However, one can move the parallelogram of 4-periodic orbit in such a way that all vertices move along the characteristics of $Y$ (here we assume that $\partial X$ is $C^2$-smooth). This is because $Y$ is invariant under $T$, which is a symplectomorphism and hence maps characteristics to characteristics.

\begin{theorem}{\label {thm:planar}} Let $X\subset\mathbb R^{2n}$ be a $C^2$-smooth symplectically self-polar convex body. Let  $\gamma(t)$ be a characteristic curve on $\partial X$. If $\delta(t) = f(\gamma(t))$ is also a characteristic curve on $\partial X$, then  $\gamma$ and $\delta$ are planar curves.
\end{theorem}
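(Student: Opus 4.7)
My plan is to convert the hypothesis into a coupled linear ODE for the pair $(\gamma,\delta)$ whose coefficients are scalar (not matrix-valued), which will force the solution to stay inside a fixed 2-plane.

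First, since $\gamma$ is a characteristic curve and the characteristic direction at a point $x\in\partial X$ is spanned by $f(x)$, there is a scalar function $\lambda(t)$ with
\[
    \gamma'(t) = \lambda(t)\, f(\gamma(t)) = \lambda(t)\, \delta(t).
\]
Second, the hypothesis that $\delta$ is itself a characteristic curve tells us that $\delta'(t)$ is parallel to $f(\delta(t))$; but by Lemma~\ref{lem:f_compose}, $f(\delta(t)) = f(f(\gamma(t))) = -\gamma(t)$. So there is a scalar function $\nu(t)$ with
\[
    \delta'(t) = \nu(t)\, \gamma(t).
\]

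Now the system $\gamma' = \lambda\,\delta,\ \delta' = \nu\,\gamma$ is linear in $(\gamma,\delta)\in\mathbb{R}^{2n}\oplus\mathbb{R}^{2n}$ with \emph{scalar} time-dependent coefficients. Consequently, in any fixed basis $\{e_i\}$ of $\mathbb{R}^{2n}$ each pair of coordinates $(\gamma_i(t),\delta_i(t))$ satisfies the same scalar $2\times 2$ ODE, independent of the other pairs. By uniqueness of solutions, any coordinate pair vanishing at $t=0$ vanishes for all $t$. Since $\omega(\gamma(0),\delta(0)) = 1 \neq 0$, the vectors $\gamma(0)$ and $\delta(0)$ are linearly independent; choose a basis of $\mathbb{R}^{2n}$ whose first two vectors span $\Pi = \mathop{\text{span}}(\gamma(0),\delta(0))$. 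Then $\gamma_i(0)=\delta_i(0)=0$ for $i\geq 3$, so the same holds for every $t$, and hence both $\gamma$ and $\delta$ remain in the 2-plane $\Pi$.

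I do not expect a genuine obstacle: the decisive observation is that Lemma~\ref{lem:f_compose} is precisely what closes the ODE system by expressing the tangent direction to $\delta$ again in terms of $\gamma$ itself. Without that identity one would only know $\delta'\in T_{\delta}\partial X$, giving matrix-valued coefficients that need not preserve any 2-plane; with it, the system has the very rigid ``scalar coupling'' form above, from which planarity is essentially immediate.
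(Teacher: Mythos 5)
Your proposal is correct and follows essentially the same route as the paper: both reduce the hypothesis, via Lemma~\ref{lem:f_compose}, to the scalar-coupled linear system $\dot\gamma = \lambda\,\delta$, $\dot\delta = \nu\,\gamma$ and conclude planarity. The only difference is that you spell out the final step (uniqueness for linear ODEs with continuous coefficients forces the coordinates outside $\mathop{\text{span}}(\gamma(0),\delta(0))$ to vanish identically), which the paper leaves implicit.
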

\begin{proof}
	Since the characteristic line at $\gamma(t)$ is parallel to $f(\gamma(t))=\delta(t)$, and the characteristic line at $\delta(t)$ is parallel to $f(\delta(t))=-\gamma(t)$, then
	\[
		\begin{cases}
			\dot{\gamma}(t) = \alpha(t) \cdot \delta(t), \\
			\dot{\delta}(t) = \beta(t) \cdot \gamma(t),
		\end{cases}
	\]
	for some continuous functions $\alpha(t)$ and $\beta(t)$. These two equations imply that $\gamma(t), \delta(t)$ are planar curves.
\end{proof}
Using Theorem \ref{thm:planar} and  the result of ~\cite[Theorem 1.3]{karasev2024convex} (which is stated in the $C^\infty$ category) we get the following:
\begin{corollary}\label{cor:planar}Let $X\subset\mathbb R^{2n}$ be a $C^\infty$-smooth symplectically self-polar convex body.
	If $f$ maps the characteristics of $\partial X$ to the characteristics of $\partial X$, then $X$ is linearly symplectomorphic to the unit Euclidean ball.
\end{corollary}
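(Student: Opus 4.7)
The proof should be essentially immediate once Theorem~\ref{thm:planar} is in hand, so the plan is just to unpack the hypothesis and feed it into the cited result of Karasev.

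First, I would fix an arbitrary characteristic curve $\gamma$ on $\partial X$. By the hypothesis of the corollary, $f$ sends characteristics of $\partial X$ to characteristics of $\partial X$, so the curve $\delta(t) = f(\gamma(t))$ is (up to reparametrization) again a characteristic of $\partial X$. This is exactly the situation in which Theorem~\ref{thm:planar} applies, and it yields that both $\gamma$ and $\delta$ are planar curves. Since $\gamma$ was an arbitrary characteristic on $\partial X$, every characteristic leaf of the characteristic foliation of $\partial X$ lies in a $2$-dimensional affine plane in $\mathbb{R}^{2n}$.

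Second, I would invoke Theorem~1.3 of~\cite{karasev2024convex}, whose statement is exactly the conclusion we need: for a $C^\infty$-smooth strictly convex body in $\mathbb{R}^{2n}$ whose characteristic foliation consists entirely of planar curves, the body is linearly symplectomorphic to the standard Euclidean ball. Our $X$ satisfies the hypotheses (the $C^\infty$ assumption is passed through from the corollary's statement, and strict convexity of $X$ follows from Lemma~\ref{lem:strict_conv}), so this gives the conclusion.

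The main subtlety is not really an obstacle but a bookkeeping point: one must check that "$f$ maps characteristics to characteristics" is strong enough to put one literally in the setting of Theorem~\ref{thm:planar}. The theorem needs $\delta(t) = f(\gamma(t))$ to be a characteristic parametrized so that $\dot\delta$ is proportional to $f(\delta)$; but up to a smooth reparametrization of $\gamma$ this is automatic from the hypothesis together with the fact that $f$ is a $C^{k-1}$ diffeomorphism of $\partial X$ with $f\circ f = -\id$ (Lemma~\ref{lem:f_compose}). Once this is noted, no further computation is required, and the whole argument reduces to citing Theorem~\ref{thm:planar} and Theorem~1.3 of~\cite{karasev2024convex} in succession.
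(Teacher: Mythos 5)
Your proposal is correct and matches the paper's argument exactly: the paper derives the corollary by applying Theorem~\ref{thm:planar} to each characteristic to conclude that all characteristics of $\partial X$ are planar, and then cites \cite[Theorem 1.3]{karasev2024convex}. Your extra remark about reparametrizing $\delta$ is a reasonable bookkeeping point that the paper leaves implicit.
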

Similarly, we have the following:
\begin{corollary}
	Let $X\subset\mathbb R^{2n}$ be a $C^\infty$-smooth symplectically self-polar convex body.
	Let $\psi_\pm \colon \partial X\to Y$ be the map defined by
	$\psi_\pm(x) = x \pm f(x)$. If $\psi_\pm$ maps the characteristics of $\partial X$ to the characteristics of $Y$, then $X$ is linearly symplectomorphic to the unit Euclidean ball.
\end{corollary}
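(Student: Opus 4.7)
The plan is to reduce the corollary to the preceding Corollary~\ref{cor:planar}, which gives the same conclusion under the hypothesis that $f$ itself maps characteristics of $\partial X$ to characteristics of $\partial X$. So my goal becomes to derive this property of $f$ from the hypothesis on $\psi_\pm$.

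The key algebraic input is Theorem~\ref{thm:main}, invoked twice. The identity $Tz_4 = z_1$ reads as $T \circ \psi_- = \psi_+$, and the identity $Tz_1 = z_2 = f(x) + f(f(x))$ reads as the conjugacy relation
\[
T \circ \psi_+ = \psi_+ \circ f.
\]
Since $T$ is a global symplectomorphism, $Y$ is $T$-invariant (by the observation opening Section~\ref{sec:invariant_hypersurface}), and $T^*\omega = \omega$, the differential of $T$ preserves the characteristic direction field $\ker(\omega|_{TY})$ on $Y$. From the first identity, if $\psi_-$ sends characteristic directions of $\partial X$ to those of $Y$, then so does $\psi_+ = T \circ \psi_-$; thus I may assume from here that the hypothesis holds for $\psi_+$.

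Under the $C^\infty$ hypothesis, Lemma~\ref{lem:top_sphere} gives that $Y$ is a $C^\infty$ embedded submanifold and that $\psi_+ \colon \partial X \to Y$ is a diffeomorphism. Since $\psi_+$ carries the characteristic line field on $\partial X$ onto the characteristic line field on $Y$, the inverse $\psi_+^{-1}$ carries the characteristic line field on $Y$ back to that on $\partial X$. Applying the conjugacy, $f = \psi_+^{-1} \circ T \circ \psi_+$ is a composition of three maps each of which preserves the relevant characteristic line field, so $f$ maps characteristics of $\partial X$ to characteristics of $\partial X$, and Corollary~\ref{cor:planar} finishes the proof. I expect the main step to be setting up precisely this diagram: once the conjugacy $T \circ \psi_+ = \psi_+ \circ f$ and the preservation of characteristics by $T|_Y$ are in hand, the rest is a formal symplectic diagram chase, and all real content is absorbed into Corollary~\ref{cor:planar} (which in turn rests on Karasev's theorem).
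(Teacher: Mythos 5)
Your argument is correct, and the overall strategy is the same as the paper's: show that $f$ carries characteristics of $\partial X$ to characteristics of $\partial X$ and then invoke Corollary~\ref{cor:planar}. The difference is in how you factor $f$. You use the conjugacy $f=\psi_+^{-1}\circ T\circ\psi_+$ (equivalently $Tz_1=z_2$ from Theorem~\ref{thm:main}) together with the fact that $T$ restricted to the invariant hypersurface $Y$ preserves the characteristic line field $\ker(\omega|_{TY})$; this requires $T$ to be a genuine $C^\infty$ symplectomorphism, which in turn rests on the positive-curvature statement of Lemma~\ref{lem:strong_conv}, and on $\psi_+$ being a diffeomorphism onto $Y$ (Lemma~\ref{lem:top_sphere}). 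The paper instead uses the purely algebraic identity $\psi_-\circ f=\psi_+$, i.e.\ $f=\psi_-^{-1}\circ\psi_+$, which follows in one line from $f\circ f=-\id$ (Lemma~\ref{lem:f_compose}) and requires no dynamical input at all: if both $\psi_\pm$ preserve characteristics, so does their composition $\psi_-^{-1}\circ\psi_+=f$. Your route buys a slightly weaker hypothesis --- you only need \emph{one} of $\psi_+,\psi_-$ to preserve characteristics, since $T\circ\psi_-=\psi_+$ transports the property from one to the other --- at the cost of importing the smoothness and symplecticity of $T$; the paper's route is shorter and more elementary. Both are valid.
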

\begin{proof}
	Consider any characteristic curve $\gamma(t)$ on the boundary $\partial X$,  and the corresponding curve $\sigma_\pm (t) = \gamma(t) \pm f(\gamma(t))$. By the assumptions, $\sigma_\pm$ is a characteristic curve on $Y$. Hence, we know that $\psi_\pm$ are diffeomorphisms and the differentials of $\psi_\pm$ map the characteristic line of $X$ at $x$ to the characteristic lines of $Y$ at $\psi_\pm(x)$ for every $x \in \partial X$.  Therefore, the differential of the map $f=\psi_-^{-1} \circ \psi_+ \colon \partial X \to\partial  X$ maps the characteristic line of $\partial X$ at $x$ to the characteristic line of $\partial X$ at $f(x)$. Hence, Corollary \ref{cor:planar} implies the result.
\end{proof}

\section{Examples}\label{sec:examples}

In this section, we discuss how to construct symplectically self-polar convex bodies with $C^1$-smooth boundary and show that there exist symplectically self-polar convex bodies with a $C^\infty$-smooth boundary that are not linearly symplectomorphic to the unit Euclidean ball.

\textbf{$C^1$-smooth boundary:} The following two examples were introduced in~\cite{berezovik2022symplectic}.

First, introduce the $l_2$-sum operation for centrally symmetric convex bodies. Let $X \subset \mathbb{R}^n$ and $Y \subset \mathbb{R}^m$ be centrally symmetric convex bodies, then their $l_2$-sum is the following set $$X \oplus_2 Y = \{(x,y) \in \mathbb{R}^n \times \mathbb{R}^m: \|x\|_X^2 + \|y\|_Y^2 \leq 1\}.$$
\begin{itemize}[leftmargin=*]
	\item Let $K \subset \mathbb{R}^n$ be a centrally symmetric strictly convex body with $C^1$-smooth boundary, then $K^\circ$ is also strictly convex and has $C^1$-smooth boundary. Consider their Lagrangian $l_2$-sum $X = K \oplus_2 K^\circ \subset \mathbb{R}^n \times \mathbb{R}^n$. Note that $X$ is symplectically self-polar, indeed
	      \[
		      X^\omega = J(K \oplus_2 K^\circ)^\circ = J(K^\circ \oplus_2 K) = K \oplus_2 K^\circ = X.
	      \]
	      Here we used that $l_2$-sum commutes with polar transformation. Moreover, it is easy to check that $X$ is strictly convex, then by Lemma~\ref{lem:strict_conv} $X$ has $C^1$-smooth boundary (this is also true because an $l_2$-sum of bodies with a $C^1$-smooth boundary has a $C^1$-smooth boundary).
	\item Let $X \subset \mathbb{R}^{2n}$ and $Y \subset \mathbb{R}^{2m}$ be symplectically self-polar convex bodies with $C^1$-smooth boundary. Consider their symplectic $l_2$-sum $X \oplus_2 Y \subset \mathbb{R}^{2n} \times \mathbb{R}^{2m}$, then it is again symplectically self-polar because a symplectic $l_2$-sum commutes with a symplectic polarity. $X \oplus_2 Y$ is again strictly convex and has a $C^1$-smooth boundary.
\end{itemize}
Note that these procedures do not allow us to obtain symplectically self-polar bodies with a $C^2$-smooth boundary, because in general the case an $l_2$-sum of bodies with $C^\infty$-smooth boundary does not have $C^2$-smooth boundary.

\textbf{$C^\infty$-smooth boundary:} Take an arbitrary $\varepsilon > 0$ and consider the set $U_{\varepsilon} \subset \mathbb{R}^{2n}$ of points whose coordinates are all greater than $\varepsilon$. Let $X \subset \mathbb{R}^{2n}$ be a centrally symmetric convex body with a $C^\infty$-smooth boundary and a positive curvature, such that $X$ coincides with the unit Euclidean ball $B \subset \mathbb{R}^{2n}$ outside $U_{\varepsilon} \cup (-U_{\varepsilon})$. Then $X^\circ$ is also a centrally symmetric convex body with a $C^\infty$-smooth boundary and a positive curvature, and $X^\circ$ coincides with $B$ outside $U_{\varepsilon} \cup (-U_{\varepsilon})$. Now consider the centrally symmetric convex body $Z \subset \mathbb{R}^{2n}$ which coincides with the $X$ in $U_{\varepsilon} \cup (-U_{\varepsilon})$ and coincides with $JX^\circ$ in $JU_{\varepsilon} \cup (-JU_{\varepsilon})$ and coincides with the ball $B$ outside all these sets. One can check that this body has $C^\infty$-smooth boundary and it is symplectically self-polar because $Z = JZ^\circ = Z^\omega$.
\section{Discussion and open questions}

\begin{question}
	Are there two different symplectically self-polar convex bodies $X_1,X_2 \subset \mathbb{R}^{2n}$ with a smooth boundary such that the corresponding $Y_1$ and $Y_2$ coincide?
\end{question}

In dimension two, for bodies with a $C^2$-smooth boundary, the answer is negative. Indeed, consider a symplectically self-polar convex body $X \subset \mathbb{R}^2$ and the corresponding set $Y$. Then $\partial X$ can be recovered from $Y$ by the so-called area construction (the outer billiard analog of the string construction \cite{tabachnikov1995dual}) as follows.
Let $A$ be the area of the region bounded by $Y$. For any point $y$,
consider the segment $[y,z(y)], z(y)\in Y$ cutting from $Y$ a segment of a constant area equal to $\frac{A-4}{4}$ (Figure~\ref{fig_1}, gray area). Then,
\[
	\partial X = \left\{\frac{y + z(y)}{2}: y \in Y\right\}.
\]

\begin{figure}[ht]
	\centering
	\includegraphics[width=0.8\textwidth]{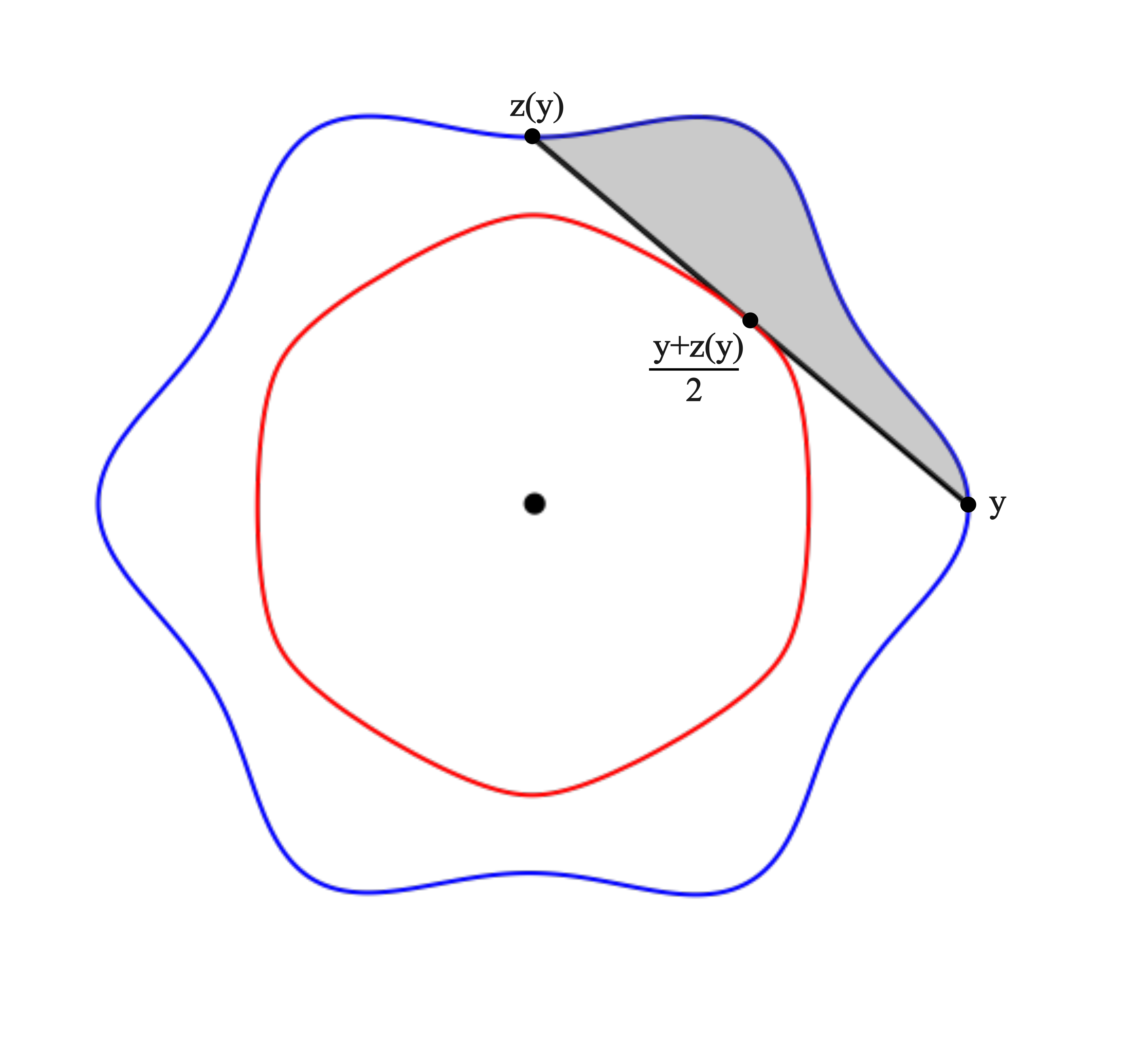}
	\caption{{Area construction applied to $Y$ with the area parameter $\frac{A-4}{4}$ }.}
	\label{fig_1}
\end{figure}

\begin{question}
	What does $Y$ remember about $X$? Is there some non-trivial relation between the volume which is bounded by $Y$ and the volume of $X$, or maybe between some other invariants (e.g. capacity, systole)? In dimension two, one can prove that the volume bounded by $Y$ equals twice the volume of~$X$.
\end{question}

\begin{question}
	It is not clear if the result of Theorem \ref{thm:main_reverse} remains true for a $C^1$-smooth convex symplectically self-polar body $X$. In the $C^1$-case, the map $f$ is only continuous and we cannot use partial derivatives.
\end{question}

\begin{question}
	In \cite{bialy2022self}, analytic examples of Radon curves were found via a Lam\'e equation. Interestingly, are there analytic examples of symplectically self-polar bodies in higher dimensions?

\end{question}

\begin{question}
	We can generalize the set $Y$ for an arbitrary symplectically self-polar convex body $X$, not necessarily with a smooth boundary, as
	\[
		Y = \{x+y:\ x,y \in \partial X,\: \omega(x,y)  = 1\}.
	\]
	What are the geometric or topological properties of $Y$ in this case? Is it still a topological sphere?
\end{question}
\begin{question}
	For planar sufficiently smooth outer billiards, KAM theory ensures the existence of invariant curves at infinity and near the boundary. It then follows in particular that all orbits are bounded.
	A general question on outer billiards in higher dimensions arises: if there exist unbounded orbits.

\end{question}

\bibliography{bibliography}
\bibliographystyle{abbrv}

\end{document}